\newtheorem{theorem}{Theorem}[section]
\newtheorem{lemma}[theorem]{Lemma}
\theoremstyle{definition}
\newtheorem{definition}[theorem]{Definition}
\newtheorem{corollary}[theorem]{Corollary}
\theoremstyle{remark}
\newtheorem{claim}[theorem]{{\bf Claim}}
\numberwithin{equation}{section}
\begin{document}

\def\C{\mathbb C}
\def\R{\mathbb R}
\def\X{\mathbb X}
\def\Z{\mathbb Z}
\def\Y{\mathbb Y}
\def\Z{\mathbb Z}
\def\N{\mathbb N}
\def\cal{\mathcal}
\def\cD{\cal D}
\def\tD{\tilde{{\cal D}}}
\def\F{\cal F}
\def\tf{\tilde{f}}
\def\tg{\tilde{g}}
\def\tu{\tilde{u}}

\def\cal{\mathcal}
\def\b{\mathcal B}
\def\c{\mathcal C}
\def\cc{\mathbb C}
\def\x{\mathbb X}
\def\r{\mathbb R}
\def\T{\mathbb T}
\def\uu{(U(t,s))_{t\ge s}}
\def\vv{(V(t,s))_{t\ge s}}
\def\xx{(X(t,s))_{t\ge s}}
\def\yy{(Y(t,s))_{t\ge s}}
\def\zz{(Z(t,s))_{t\ge s}}
\def\ss{(S(t))_{t\ge 0}}
\def\tt{(T(t,s))_{t\ge s}}
\def\rr{(R(t))_{t\ge 0}}
\title[Diffusive Delay in Nicholson Blowflies Models]{\textbf{EXISTENCE OF TRAVELING WAVES in a Nicholson Blowflies Model WITH DELAYED DIFFUSION TERM   }}

\author[Barker]{William Barker}
\address{Department of Mathematics and Statistics, University of Arkansas at Little Rock, 2801 S University Ave, Little Rock, AR 72204. USA}
\email{wkbarker@ualr.edu}

\thanks{}

\date{\today}
\subjclass[2000]{Primary: 35C07 ; Secondary: 35K57 }
\keywords{Traveling waves; Reaction-diffusion equations; Delay; Nicholson Blowflies Equation}
\begin{abstract} In this paper we consider traveling waves for a diffusive Nicholson Blowflies Equation with different discrete time delays in the diffusion term and birth function. We construct quasi upper and lower solutions via the monotone iteration method. This also allows for the construction of $C^2$ upper and lower solutions, and then traveling wave solutions. We then provide numerical results for the  kernel for the iteration.   
\end{abstract}

\maketitle
\section{Introduction}
\noindent The diffusive Nicholson Blowflies Model with a single delay in the birth term is of the form 

\begin{equation}\label{NDE}
\frac{\partial u(t,x)}{\partial t}=\frac{\partial^2 u(t,x)}{\partial x^2}-\delta u(t,x)+pu(t-\tau_1,x)e^{-au(t-\tau_1,x)}
\end{equation}
for $x\in \R, t\ge 0.$  $u(t, x)$ is the blowfly population in space at a certain time, $t.$ The death rate is $\delta>0,$ impact rate on the immature population is $p>0$ due to birth,  and $\tau_1>0$ is the maturation  delay. For more information about the model see Nicholson's groundbreaking work, \cite{Nick1, Nick2} and the  adaptation to include spatial diffusion, \cite{SoYang1, SoYang2, SoYang3}. The above model has been studied in several celebrated papers.

\medskip
\noindent
In 2001, So and Zou, \cite{SoZou} provided an elegant result providing a construction of upper and lower solutions when there is delay in the birth function. This extended the seminal results on the existence of traveling wave solutions via the so called monotone iteration method put forth by Wu and Zou, \cite{wuzou}. In the standard monotone or quasi monotone iteration methods, it is often important to construct upper and lower wave front solutions. This idea was extended by Ma, \cite{ma} who developed super and sub solutions, which relaxed the requirements in Wu and Zou.

\medskip
\noindent
The dynamics of traveling waves for the Nicholson Blowflies model with distributed delay was discussed in 2000 by Gourley and Ruan, \cite{GouRuan} by using energy methods and by a comparison principle for functional differential equations. In a recent paper, the global attractivity of the positive steady state was studied for a non-monotone model with distributed delay by Deng and Wu, \cite{DengWu}.  

\medskip
\noindent 
In 2004, Mei \textit{et al.}, \cite{MeiSo} discussed the  nonlinear stability of traveling wavefronts of a time-delayed
diffusive Nicholson blowflies equation under a weighted $L^2$ norm. This result is very interesting and was extended by Lin and Mei, \cite{LinMei} who also provided numerical results via a finite difference method. Furthermore, recent results by Huang and Liu, \cite{HuangLiu} and Huang and Xu \cite{HuangXu} studied the existence of traveling waves with a birth function given two different delays. 

\medskip
\noindent
Boumenir and Nguyen, \cite{boumin} developed the idea of quasi upper and lower solutions via a modified Perron Theorem, see Theorem (4.1) in Mallet-Peret, \cite{mal}. More information can also be found in Pruss, \cite{pru}.  This leads us to the motivation of this paper. The idea of placing the diffusion term is a relatively new idea. In fact, it was shown in Barker and Nguyen, \cite{BarkNguy} traveling waves exist for reaction diffusion equations with discrete delay in both the diffusion and reaction term of the form \begin{equation}\label{RD1}
\frac{\partial u(x,t)}{\partial t}=D\frac{\partial^2 u(x,t-\tau_1)}{\partial x^2} +f(u_t),
\end{equation}
where $t\in\R, \tau_1>0,  x, u(x,t)\in \R, \ D>0, \ f:C\left([-\tau_2,0], \R\right)\to \R$ is continuous and $u_t(x)\in C\left([-\tau_2,0], \R\right),$ defined as 
\[u_t(x)=u(x, t+\theta), \ \theta\in [-\tau_2,0], \ t\ge 0, \ x\in \R.\]
Where $f$ is Lipschitz continuous and 
\[f(0)=f(K)=0, \ \text{and} \ f(u)\neq 0, \ 0<u<K.\]
under certain monotone conditions.

\medskip
\noindent
This paper will be organized as follows: Section 2 will lay out some preliminary notation that may be used in the later sections. Our main results will be stated in Section 3. The existence of traveling waves will be discussed as well as solutions to exponential type polynomials of second order. The last part of Section 3 will be the explicit construction of quasi upper and lower solutions for the Nicholson Blowflies Equation with diffusive delay. Section 4 will cover some numerical results for the convolution kernel and the resulting upper and lower solutions.  

\section{Preliminaries}
In this paper we will use some standard notations such as $\R,\C$ standing for the fields of reals and complex numbers. $\Re z$ and $\Im z$ denote the real part and imaginary part of a complex number $z$. The space of all bounded and continuous functions from $\R  \to \R^n$ is denoted by $BC(\R,\R^n)$ which is equipped with the sup-norm $\| f\| := \sup_{t\in\R} \| f(t)\|$. $BC^k(\R,\R^n)$ stands for the space of all $k$-time continuously differentiable functions $\R\to\R^n$ such that all derivatives up to order $k$ are bounded. 

\medskip
\noindent
If the boundedness is dropped from the above function spaces we will simply denote them by $C(\R,\R^n)$ and $C^k(\R,\R^n)$. We will use the natural order in $BC(\R,\R^n)$ that is defined as follows: 
For $f,g\in BC(\R,\R^n)$ we say that $f\le g$ if and only if $f(t)\le g(t)$ for all $t\in \R$, and we will say that $f< g$ if $f(t)\le g(t)$ for all $t\in \R$, and $f(t)\not= g(t)$ for all $t\in \R$. 
\section{Main Results}
\noindent We consider the following diffusive delayed Nicholson Blowflies model
\begin{equation}\label{NDE1}
\frac{\partial u(t,x)}{\partial t}=\frac{\partial^2 u(t-\tau_1,x)}{\partial x^2}-\delta u(t,x)+pu(t-\tau_2,x)e^{-au(t-\tau_2,x)}
\end{equation}

\medskip
\noindent
Moreover, we assume that $1<p/\delta\le e,$ then we can find two equilibria
\[U_0=0, \ U_{e}=\frac{1}{a}(\ln{(p/\delta)}).\]
We are interested in the question:  Is there a traveling wave front connecting the two equilibrium of Eq. (\ref{NDE1})? 

\medskip
\noindent
To this end, we assume the wave translation form as $\xi=x+ct$ where $c$ is a positive wave speed. Applying the transformation $\phi(\xi)=u(t,x)$ to  Eq. (\ref{NDE1}) gives 
\begin{equation}
\phi^{\prime\prime}(\xi-c\tau_1)-c\phi^{\prime}(\xi)-\delta \phi(\xi)+p \phi(\xi-c\tau_2)e^{-a \phi(\xi-c\tau_2)}=0.
\end{equation}
Moving the delay out of the diffusion term via the transformation $\zeta=\xi-r_1,$ where $r_1=c\tau_1, r_2=c\tau_2$ yields the equivalent model
\begin{equation}
\phi^{\prime\prime}(\zeta)-c\phi^{\prime}(\zeta+r_1)-\delta \phi(\zeta+r_1)+p \phi(\zeta+(r_1-r_2))e^{-a \phi(\zeta+(r_1-r_2))}=0.
\end{equation}
For simplicity let $\zeta=t$ and the model becomes
\begin{equation}\label{WaveA}
\phi^{\prime\prime}(t)-c\phi^{\prime}(t+r_1)-\delta \phi(t+r_1)+p \phi(t+(r_1-r_2))e^{-a \phi(t+(r_1-r_2))}=0.
\end{equation}

\medskip
\noindent
We invoke the following monotone conditions on $f$.  \begin{enumerate}
\item[(H1)] $f(\hat 0) =f(\hat K)=0,$ where $\hat 0$, ($\hat U_e$, respectively) is the constant function $\phi (\theta) =0$ ($\phi (\theta )=U_e$, respectively), for all $\theta \in [-\tau_2,0]$;
\item[(H2)] There exists a positive constant $\beta$ such that
$$
f(\varphi)-f(\psi ) +\beta (\varphi(0)-\psi (0)) \ge 0
$$
for all $\varphi, \psi \in C([-\tau_2,0],\R)$ with $0\le \varphi (s)\le \phi (s)\le U_e$ for all $s\in [-\tau_2,0]$;
\end{enumerate}
Using the monotone conditions (H1) and (H2) we can write Equation (\ref{WaveA}) as
\begin{equation}\label{Wave1}
    \phi^{\prime\prime}(t)-c\phi^{\prime}(t+r_1)-\delta \phi(t+r_1)-\beta\phi(t+r_1)+H(\phi(t))=0,
\end{equation}
\noindent where $H(\phi(t))=p \phi(t+(r_1-r_2))e^{-a \phi(t+(r_1-r_2))}+\beta\phi(t+r_1).$
\medskip
\noindent
The asymptotic behavior is \[\lim_{t\to -\infty}\phi(t)=U_0, \ \lim_{t\to \infty}\phi(t)=U_{e}.\]

\begin{definition} A function $\varphi \in BC^2(\R,\R)$ is called an upper solution (lower solution, respectively) for the wave equation (\ref{Wave1}) if it satisfies the following
\begin{align*}
& D\varphi''(t)-c\varphi'(t+r_1)+f^c(\varphi_{t+r_1})\le 0 , \\
& (D\varphi''(t)-c\varphi'(t+r_1)+f^c(\varphi_{t+r_1})\ge 0, \ \mbox{respectively})
\end{align*}
for all $t\in\R$.
\end{definition}

\medskip
\noindent
Here,
$f_{c}\in \X_c:= :C([-c\tau_2,0],\mathbb{R}^{n})\rightarrow\mathbb{R}$,
defined as
\[
f_{c}(\psi)=f(\psi^{c}),\quad\psi^{c}(\theta):=\psi(c\theta),\quad\theta
\in\lbrack-\tau_2,0].
\]
\begin{definition} A function $\varphi \in C^1(\R,\R),$ where $ \varphi, \varphi'$ are bounded on $\R$, $\varphi''$ is locally integrable and essentially bounded on $\R$ (that is, $\varphi''\in L^\infty$), is called a quasi- upper solution (quasi-lower solution, respectively) for the wave equation (\ref{Wave1}) if it satisfies the following for almost every $t\in \R$
\begin{align*}
& D\varphi''(t)-c\varphi'(t+r_1)+f^c(\varphi_{t+r_1})\le 0 , \\
& (D\varphi''(t)-c\varphi'(t+r_1)+f^c(\varphi_{t+r_1})\ge 0, \ \mbox{respectively}) .
\end{align*}
\end{definition}

 \medskip
 \noindent
The quadratic equation $\lambda^2-c\lambda-\delta=0,$ which has two distinct real roots \[\lambda_1=\frac{c-\sqrt{c^2+4\delta}}{2}<0, \ \lambda_2=\frac{c+\sqrt{c^2+4\delta}}{2}>0,\]
whenever $c>2\sqrt{\delta}.$ This allows for the following lemma. 
\begin{lemma} Let $c>2\sqrt{\delta}$ and consider the characteristic equation for Eq. (\ref{Wave1}) 
\begin{equation}\label{CE}
 \lambda^2-c\lambda e^{r_1\lambda}-\delta e^{r_1\lambda}=0.   
\end{equation}
Define an open strip $U\subset \C$ by $\{z\in \C : \lambda_1-\varepsilon\le\Re{z}\le 0\}.$ Then For sufficiently small $r_1, \varepsilon>0$ 
\begin{enumerate}
\item the characteristic equation has no roots on the imaginary axis,

\item the characteristic equation has a single root continuously dependent on $r_1$ in $U,$ denoted as $\eta_1(r_1).$ 
\end{enumerate} 
\end{lemma}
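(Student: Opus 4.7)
The plan is to view the characteristic equation as a perturbation of the $r_1=0$ problem. Write
\[
F(\lambda,r_1) := \lambda^2 - (c\lambda + \delta)e^{r_1\lambda},
\]
so that $F(\lambda,0) = \lambda^2 - c\lambda - \delta$ has roots $\lambda_1 < 0 < \lambda_2$, of which only $\lambda_1$ lies in $\overline{U}$. The preliminary step is an a priori bound: for $\lambda\in U$ we have $\Re\lambda\le 0$, so $|e^{r_1\lambda}|\le 1$, and rewriting the equation as $\lambda^2=(c\lambda+\delta)e^{r_1\lambda}$ gives $|\lambda|^2\le c|\lambda|+\delta$. Hence there is a constant $M=M(c,\delta)$, independent of $r_1\ge 0$, such that every root of $F(\cdot,r_1)$ in $U$ lies in the compact set $K := \overline U \cap \overline{B(0,M)}$.

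For part (1), suppose $\lambda=i\omega$ satisfies $F(i\omega,r_1)=0$. Taking moduli, and using $|e^{ir_1\omega}|=1$, yields $\omega^4-c^2\omega^2-\delta^2=0$, which pins $\omega^2$ to the single positive value $\omega_\ast^2 := \tfrac{1}{2}(c^2+\sqrt{c^4+4\delta^2})$. At $r_1=0$, however, $F(i\omega,0)=0$ reduces to $-\omega^2=ci\omega+\delta$, whose real part alone would force $\omega^2=-\delta<0$; therefore $F(\pm i\omega_\ast,0)\ne 0$. By joint continuity of $F$, $F(\pm i\omega_\ast,r_1)\ne 0$ for all sufficiently small $r_1$, and since any imaginary root must have $|\omega|=\omega_\ast$, this rules out purely imaginary roots.

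For part (2), a direct computation gives
\[
\frac{\partial F}{\partial\lambda}(\lambda_1,0) = 2\lambda_1 - c = -\sqrt{c^2+4\delta}\ne 0,
\]
so the holomorphic implicit function theorem produces an analytic branch $\eta_1(r_1)$ of simple zeros in a neighborhood of $\lambda_1$ with $\eta_1(0)=\lambda_1$. The main obstacle is upgrading this local statement to uniqueness throughout the unbounded strip $U$: a priori, additional roots might enter $U$ from the imaginary direction as $r_1$ becomes positive. This is exactly where the a priori bound from the first step is essential, confining any extra root to the compact set $K$. I would finish by contradiction: if there were sequences $r_{1,n}\downarrow 0$ and $\mu_n\in K$ with $F(\mu_n,r_{1,n})=0$ and $\mu_n\ne\eta_1(r_{1,n})$, a subsequential limit $\mu^\ast\in K$ would satisfy $F(\mu^\ast,0)=0$ and, since $\lambda_2\notin\overline U$, be forced to equal $\lambda_1$, contradicting local uniqueness from the implicit function theorem. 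Equivalently, since $|F(\cdot,0)|$ is bounded below on $\partial B(\lambda_1,\rho)$ and on $K\setminus B(\lambda_1,\rho)$, a Rouch\'e argument comparing $F(\cdot,r_1)$ with $F(\cdot,0)$ delivers the same conclusion uniformly for $r_1$ small.
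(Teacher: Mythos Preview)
Your argument is correct and complete. The paper does not actually prove this lemma; it simply records it as a special case of Proposition~3.1 in \cite{BarkNguy} and moves on. Your route is therefore genuinely different in that it is self-contained: you treat $F(\lambda,r_1)=\lambda^2-(c\lambda+\delta)e^{r_1\lambda}$ as a holomorphic perturbation of the quadratic $F(\lambda,0)$, extract the uniform a~priori bound $|\lambda|^2\le c|\lambda|+\delta$ on $\{\Re\lambda\le 0\}$ to confine all roots in $U$ to a fixed compact set, reduce the imaginary-axis question to two explicit points $\pm i\omega_\ast$ and dispose of them by continuity from $r_1=0$, and finally combine the implicit function theorem with a compactness/Rouch\'e argument for global uniqueness in $U$. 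What your approach buys is an explicit, elementary proof with no external dependence; what the paper's citation buys is brevity, deferring to a general result on second-order delayed characteristic equations. One small addition worth making: since $U$ is invariant under complex conjugation and $F(\cdot,r_1)$ has real coefficients, your uniqueness conclusion immediately forces $\eta_1(r_1)\in\R$, which the paper tacitly uses later when writing $e^{\eta_1 t}$ in the quasi-lower solution.
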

\noindent This is a special case of Proposition (3.1) from {\cite{BarkNguy}}.
This allows us to see that we have a unique bounded solution from the modified Perron Theorem, see {\cite{mal}}. Thus, we have the following results from {\cite{BarkNguy}}.
The solution for Eq. \ref{Wave1} is given by \begin{equation}\label{Solution}
 F\left(\phi\right)=\int^\infty_{-\infty} G(t-s,r)H(\phi(s))ds,   
\end{equation}
where there exists some positive constants $M_1,\delta_1$ such that for all $t\in \R,$ $|G(t,r)| \le M_1e^{-\delta_1 |t|}\\$ 
  
\begin{lemma}
Let $(\phi)$ be a quasi- upper solution (quasi-lower solution, respectively) of Eq. (\ref{Wave1}). Then, $F(\phi)$ is an upper solution (lower solution, respectively) of Eq. (\ref{Wave1}).
\end{lemma}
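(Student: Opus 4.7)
The plan is to exploit the fact that $F$ is built from the Green's function of the linear delay operator
\[
L\phi(t) := \phi''(t) - c\phi'(t+r_1) - (\delta+\beta)\phi(t+r_1),
\]
so that by construction $L(F(\phi))(t) = -H(\phi)(t)$ for every $t\in\R$. Once this identity is in hand, the target upper-solution inequality $L(F(\phi)) + H(F(\phi)) \le 0$ collapses to the pointwise comparison $H(F(\phi)) \le H(\phi)$, which I intend to secure from the monotonicity of $H$ guaranteed by (H2) together with the ordering $F(\phi) \le \phi$ forced by $\phi$ being a quasi-upper solution.

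First I would establish the regularity of $F(\phi)$. Since $\phi$ is a bounded quasi-upper solution taking values in $[0, U_e]$, the composition $H(\phi)$ is bounded and continuous. The exponential estimate $|G(t-s,r)| \le M_1 e^{-\delta_1|t-s|}$ then makes the defining convolution absolutely convergent, so $F(\phi)$ is well defined and bounded on $\R$. Differentiation under the integral sign, justified either by analogous exponential bounds on the derivatives of $G$ inherited from its explicit construction from the characteristic roots $\eta_1(r_1)$ and $\lambda_2$, or by transferring derivatives onto $H(\phi)$ and using boundedness of $(H(\phi))'$, places $F(\phi)\in BC^2(\R,\R)$ and verifies the identity $L(F(\phi)) = -H(\phi)$ pointwise.

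Second, I would derive the ordering $F(\phi) \le \phi$. Setting $w := \phi - F(\phi)$, the quasi-upper hypothesis gives $L\phi + H(\phi) \le 0$ almost everywhere, which combined with $L(F(\phi)) = -H(\phi)$ yields $Lw \le 0$ a.e. Because $w$ is bounded, the uniqueness clause of the modified Perron theorem in \cite{mal}, already invoked in the preceding lemma, represents $w$ by the same Green's function:
\[
w(t) = \int_{-\infty}^{\infty} G(t-s,r)\bigl(-Lw(s)\bigr)\,ds .
\]
Provided $G\ge 0$, the integrand is non-negative and hence $w\ge 0$. Applying (H2) with $\psi = F(\phi)_{t+r_1}$ and $\varphi = \phi_{t+r_1}$ then forces $H(F(\phi))(t) \le H(\phi)(t)$, and substituting into $L(F(\phi)) = -H(\phi)$ produces $L(F(\phi)) + H(F(\phi)) \le 0$ a.e. Because $F(\phi)\in BC^2$, the inequality in fact holds for every $t\in\R$, so $F(\phi)$ is an upper solution. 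The lower-solution assertion is identical with all inequalities reversed, using the second clause of (H2).

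I expect the sign of $G$ to be the main obstacle. The stated bound $|G(t,r)| \le M_1 e^{-\delta_1 |t|}$ only controls the magnitude of the kernel and does not, by itself, give $G\ge 0$; the non-negativity must be extracted from the explicit construction of $G$ through the simple characteristic roots $\eta_1(r_1)$ and $\lambda_2$ of the preceding lemma, or alternatively via a direct comparison principle for $L$ on the class of bounded functions. This is the structural point that depends most heavily on the earlier results of \cite{BarkNguy}; once it is available, the remaining steps in the argument are routine.
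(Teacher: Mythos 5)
The paper itself gives no proof of this lemma: it is stated as a result imported from \cite{BarkNguy} (following the scheme of \cite{boumin}), so your proposal can only be measured against that intended argument, and your outline is indeed the standard one: use the Green-function identity $L(F(\phi))=-H(\phi)$ (available from Mallet-Paret's theorem once Lemma 3.3 rules out imaginary-axis roots), reduce the upper-solution inequality to $H(F(\phi))\le H(\phi)$, and obtain this from monotonicity of $H$ (the role of the $\beta$-shift in (H2)) together with the comparison $F(\phi)\le\phi$, which you derive from the representation $w=\phi-F(\phi)=\int G(t-s,r)\,(-Lw(s))\,ds$ and nonnegativity of $G$. That is structurally the right route, and you are also right that the a.e.\ inequality upgrades to everywhere by continuity once $F(\phi)\in BC^2$.

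The genuine gap is exactly the one you flag and then leave open: nonnegativity of $G$. Nothing in this paper gives it --- the only stated property is $|G(t,r)|\le M_1e^{-\delta_1|t|}$, which controls size, not sign --- and with the delay $r_1>0$ sitting inside the linearized operator the kernel is defined only through the oscillatory Fourier integral (\ref{Green}), so positivity is not a formality. Note that your argument needs it twice: once to get $w\ge 0$ (hence $F(\phi)\le\phi$), and once, implicitly, to get $F(\phi)\ge 0$ so that the pair $(F(\phi)_{t},\phi_{t})$ lies in the order interval $[0,U_e]$ where (H2) applies. A plausible repair is quantitative perturbation from the undelayed case: for $r_1=0$ the kernel is explicitly $\bigl(e^{\lambda_1 t}\mathbf{1}_{\{t>0\}}+e^{\lambda_2 t}\mathbf{1}_{\{t<0\}}\bigr)/(\lambda_2-\lambda_1)>0$, and one must show the $r_1$-correction is dominated by this profile uniformly in $t$ (pointwise convergence as $r_1\to0$ is not enough, since both kernels vanish at infinity); alternatively one needs a comparison/maximum principle for $L$ on bounded functions that tolerates the advanced argument $t+r_1$. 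Until one of these is supplied --- presumably this is what \cite{BarkNguy} does, for small $r_1$ --- your proof is an outline of the correct strategy rather than a complete argument, and the same caveat applies to the paper, which asserts the lemma without proof.
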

\begin{theorem}
Assume $(H1)$ and $ (H2)$  hold, if there is an upper $\overline{\phi}$ and a lower solution $\underline{\phi}$ for Eq.(\ref{Wave1}) in $\Gamma$  such that for all $t\in \R$
\[0\le  \underline{\phi}(t)\le \overline{\phi}(t).\]
Then, there exists a monotone traveling wave solution to the system (\ref{Wave1}).
\end{theorem}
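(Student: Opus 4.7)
The plan is to run the monotone iteration method on the integral operator $F$ defined in (\ref{Solution}), taking the given upper and lower solutions as initial data. Since the preceding lemma already converts quasi-upper and quasi-lower solutions into genuine ones, it suffices to show that $F$ is monotone on the order interval $[\underline{\phi}, \overline{\phi}]$, that each iterate remains sandwiched between the initial profiles, and that the common pointwise limit is both a fixed point of $F$ (hence a classical solution of (\ref{Wave1})) and monotone in $t$ with the prescribed asymptotics at $\pm\infty$.

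First I would establish monotonicity of $F$. Hypothesis (H2), together with the reformulation (\ref{Wave1}) in which the extra $\beta\phi$ term was introduced precisely to make
\[
H(\phi)(t) = p\,\phi(t+(r_1-r_2))\,e^{-a\phi(t+(r_1-r_2))} + \beta\phi(t+r_1)
\]
nondecreasing in $\phi$ on $[0,U_e]$, yields $H(\psi_1) \le H(\psi_2)$ whenever $\underline{\phi} \le \psi_1 \le \psi_2 \le \overline{\phi}$. Coupled with the nonnegativity of the kernel $G(\cdot,r)$ coming from the modified Perron representation of \cite{mal, BarkNguy}, this gives $F(\psi_1) \le F(\psi_2)$. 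Setting $\overline{\phi}_0 := \overline{\phi}$, $\underline{\phi}_0 := \underline{\phi}$, and $\phi_{n+1} := F(\phi_n)$, the integral form of the super/sub-solution identities should yield $F(\overline{\phi}) \le \overline{\phi}$ and $F(\underline{\phi}) \ge \underline{\phi}$; induction then produces the chain
\[
\underline{\phi} \le \underline{\phi}_1 \le \cdots \le \underline{\phi}_n \le \overline{\phi}_n \le \cdots \le \overline{\phi}_1 \le \overline{\phi}.
\]

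Next I would pass to the limit. Both sequences are monotone and uniformly bounded by $U_e$, so they converge pointwise to limits $\underline{\phi}^{*} \le \overline{\phi}^{*}$. The exponential bound $|G(t,r)| \le M_1 e^{-\delta_1|t|}$ together with uniform boundedness of $H$ on $[0,U_e]$ provides an integrable dominating function, so dominated convergence yields $\overline{\phi}^{*} = F(\overline{\phi}^{*})$ and $\underline{\phi}^{*} = F(\underline{\phi}^{*})$, i.e.\ both limits solve (\ref{Wave1}). The squeezing between the two initial profiles forces the correct limits $U_0$ at $-\infty$ and $U_e$ at $+\infty$. For monotonicity of the wave profile in $t$, I would use translation invariance $F(\phi(\cdot+h)) = F(\phi)(\cdot+h)$ in conjunction with monotonicity of $F$ to show that nondecreasing initial data propagate to nondecreasing iterates, and hence to a nondecreasing limit; the explicit $\overline{\phi}$, $\underline{\phi}$ constructed in Section 3 can be arranged to be nondecreasing, so this is automatic.

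The main obstacle I anticipate is the passage from the differential quasi-super/sub-solution inequalities to their integral counterparts $F(\overline{\phi}) \le \overline{\phi}$ and $F(\underline{\phi}) \ge \underline{\phi}$. This hinges on the Green's kernel $G(\cdot,r)$ both inverting the linear delayed operator $\varphi'' - c\varphi'(\cdot+r_1) - (\delta+\beta)\varphi(\cdot+r_1)$ and being \emph{nonnegative} on $\R$, properties that are delicate in the delay setting and are exactly where the hypotheses $c > 2\sqrt{\delta}$ and $r_1$ small from the preceding lemma must be exploited. Once this comparison machinery is in hand, the remainder of the argument is a standard monotone iteration.
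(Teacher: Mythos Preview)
The paper does not actually prove this theorem: immediately before the statement it writes ``Thus, we have the following results from \cite{BarkNguy}'' and then records the lemma and the theorem as imported facts, with no argument given. Your outline is precisely the standard monotone iteration scheme (in the spirit of Wu--Zou \cite{wuzou}, Ma \cite{ma}, and Boumenir--Nguyen \cite{boumin}) adapted to the delayed-diffusion kernel, which is what the cited reference \cite{BarkNguy} presumably carries out; so in substance you are reconstructing the proof behind the citation rather than deviating from it.

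Two small points worth tightening. First, your sentence ``the squeezing between the two initial profiles forces the correct limits $U_0$ at $-\infty$ and $U_e$ at $+\infty$'' is not quite right as stated: the explicit lower solution built in Claim~\ref{claim 7} tends to $U_e/2$, not $U_e$, at $+\infty$, so squeezing alone only gives $\phi^{*}(+\infty)\in[U_e/2,U_e]$. One needs the extra (standard) step that a monotone bounded fixed point of $F$ has limits which are equilibria of the reaction term, and the only equilibrium in $[U_e/2,U_e]$ is $U_e$. Second, you correctly flag nonnegativity of $G(\cdot,r)$ as the crux; note that the paper itself only records the decay bound $|G(t,r)|\le M_1 e^{-\delta_1|t|}$ and does not assert positivity, so this is genuinely where one must appeal to \cite{BarkNguy} (and to the smallness of $r_1$) rather than to anything proved in the present paper.
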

\subsection{Quasi Upper Solutions} We will now explicitly find an acceptable quasi upper solution. The quadratic equation $\mu^2-c\mu+p=0,$ which has two  real roots $\mu_1<0<\mu_2$
whenever $c>2\sqrt{p}.$ In fact, we need the following lemma.
\begin{lemma}
Let $c>2\sqrt{p}$ and consider the equation 
\begin{equation}\label{CE1}
 \mu^2-c\mu e^{r_1\mu}+ pe^{r_1\mu}=0,   
\end{equation}
and define an open strip $V\subset \C$ by $\{z\in \C : 0<\Re{z}<\mu_2+\varepsilon\}.$ Then for sufficiently small $r_1, \varepsilon>0$ such that $V\cap\{\mu_2\}=\emptyset.$ Then Eq. (\ref{CE1}) has a single root continuously dependent on $r_1$ in $V,$ denoted as $\eta_2(r_1).$  Moreover,  $\eta_2(r_1)$ is real and
\begin{equation}
\lim_{r_1\to 0} \eta_2(r_1) =\mu_2.
\end{equation}
\end{lemma}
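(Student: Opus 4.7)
\medskip
\noindent My plan is to treat Equation \eqref{CE1} as an analytic perturbation of the quadratic $\mu^2-c\mu+p=0$ at $r_1=0$, to branch off the distinguished real root $\mu_2$ via the implicit function theorem, and then to use Rouch\'e's theorem to upgrade local uniqueness of this branch into uniqueness inside the entire strip $V$. This parallels the argument sketched for the earlier characteristic equation \eqref{CE} in this paper.

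First I would record the unperturbed picture: at $r_1=0$, Equation \eqref{CE1} reduces to $\mu^2-c\mu+p=0$, whose roots are $\mu_1<0<\mu_2=(c+\sqrt{c^2-4p})/2$, and the hypothesis $c>2\sqrt{p}$ gives $2\mu_2-c=\sqrt{c^2-4p}>0$. Setting $F(\mu,r_1):=\mu^2-c\mu e^{r_1\mu}+pe^{r_1\mu}$, a direct computation gives $\partial_\mu F(\mu_2,0)=2\mu_2-c\neq 0$, so the implicit function theorem produces an analytic curve $r_1\mapsto \eta_2(r_1)$ of roots with $\eta_2(0)=\mu_2$, defined for $|r_1|$ sufficiently small. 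Because $F(\cdot,r_1)$ has real coefficients and $\eta_2(0)$ is real, the local uniqueness afforded by the IFT forces $\eta_2(r_1)\in\R$, and continuity of the branch yields $\lim_{r_1\to 0}\eta_2(r_1)=\mu_2$.

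To promote this local statement to uniqueness inside the whole strip $V=\{0<\Re z<\mu_2+\varepsilon\}$, I would apply Rouch\'e's theorem on a bounded rectangle. For $\mu=x+iy\in V$ we have $|e^{r_1\mu}|=e^{r_1 x}\le e^{r_1(\mu_2+\varepsilon)}$, while $|\mu^2|\ge y^2$, so once $R>0$ is chosen large enough (depending only on $c,p,\varepsilon$ and an a priori bound on $r_1$) the term $\mu^2$ dominates and $F(\mu,r_1)\neq 0$ whenever $|y|>R$. It therefore suffices to count zeros inside $V_R:=V\cap\{|\Im z|\le R\}$. The unperturbed function $F(\cdot,0)=\mu^2-c\mu+p$ has the single zero $\mu_2$ in $V_R$, and $F(\cdot,r_1)\to F(\cdot,0)$ uniformly on $\partial V_R$; hence Rouch\'e gives exactly one zero of $F(\cdot,r_1)$ in $V_R$ for all small $r_1>0$, which must coincide with the branch $\eta_2(r_1)$ constructed above.

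The main obstacle I expect is verifying that $F(\cdot,0)$ is bounded away from zero on $\partial V_R$, so that Rouch\'e applies cleanly to the perturbed function for small $r_1$. The horizontal sides $|\Im z|=R$ are handled by the dominance argument once $R$ is chosen large. On the right vertical side $\Re z=\mu_2+\varepsilon$ the unperturbed quadratic has no root since $\mu_2$ is strictly interior and $\mu_1<0$. On the left side $\Re z=0$, writing $\mu=iy$ gives $F(iy,0)=-y^2+p-ciy$, whose imaginary part vanishes only at $y=0$, where the real part equals $p\neq 0$; thus $F(\cdot,0)$ is uniformly bounded away from zero on $\partial V_R$. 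Transferring this lower bound to $F(\cdot,r_1)$ via uniform convergence closes the argument, and the lemma follows. Uniqueness of the real branch and its limit then deliver the two displayed conclusions.
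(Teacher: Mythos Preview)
The paper does not supply its own proof of this lemma; it is stated by analogy with the earlier characteristic-equation lemma, which in turn is attributed to \cite{BarkNguy}. Your IFT-plus-Rouch\'e strategy is the right template for such perturbation results, and the local branch argument (including the realness of $\eta_2(r_1)$ via conjugate symmetry and local IFT uniqueness, and the limit $\eta_2(r_1)\to\mu_2$) is correct.

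The genuine gap is in the global zero count. You assert, following the paper's text, that the roots of $\mu^2-c\mu+p=0$ satisfy $\mu_1<0<\mu_2$, but this is false: the product of the roots is $p>0$ and their sum is $c>0$, so \emph{both} roots $\mu_1=(c-\sqrt{c^2-4p})/2$ and $\mu_2=(c+\sqrt{c^2-4p})/2$ are strictly positive when $c>2\sqrt{p}$. Consequently $\mu_1\in V=\{0<\Re z<\mu_2+\varepsilon\}$ as well, the unperturbed quadratic has \emph{two} zeros inside your rectangle $V_R$, and Rouch\'e then produces two zeros of $F(\cdot,r_1)$ there rather than one; your uniqueness conclusion on the stated strip does not follow. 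The repair is to shrink the strip so that it isolates $\mu_2$ from $\mu_1$, for instance $V=\{\mu_2-\varepsilon<\Re z<\mu_2+\varepsilon\}$ with $0<\varepsilon<(\mu_2-\mu_1)/2$; on that strip your boundary estimates and Rouch\'e comparison go through verbatim and the IFT branch is indeed the unique root. (The lemma as printed inherits the same sign slip, and the side condition $V\cap\{\mu_2\}=\emptyset$ appears to be a misprint, since $\mu_2$ plainly lies in $V$.)
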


\begin{claim}\label{claim 6}
For sufficiently small $r_1$ and $c>2\sqrt{p}$, and $\eta_2$ to be the root of Eq. (\ref{CE1}) in $V$, then  the function \[\varphi_1(t)=\left\{
\begin{array}
[c]{l}%
\frac{U_e}{2}e^{\eta_{2}t},\;\;\;\;\;\;\quad t\leq t_0,\\
U_{e}(1-\frac{1}{2}e^{-\eta_{2}t}),\quad t>t_0
\end{array}
\right. \]  is a quasi-upper solution of (\ref{Wave1}).
\end{claim}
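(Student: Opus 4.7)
The plan is to verify the quasi-upper-solution inequality separately on the exponential branch $t<0$ and the saturation branch $t>0$, using the characteristic equation (\ref{CE1}) for the former and the equilibrium identity $pU_e e^{-aU_e}=\delta U_e$ for the latter. Continuity of $\varphi_1$ at $t=t_0$ forces $\cosh(\eta_2 t_0)=1$, so $t_0=0$; a direct computation then shows $\varphi_1\in C^1(\R)$ with $\varphi_1,\varphi_1'$ bounded and $\varphi_1''$ piecewise continuous (hence in $L^\infty$), matching the regularity required by the definition. Monotonicity of each piece gives the uniform bound $0\le\varphi_1\le U_e$ on $\R$.

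On the lower branch $t<0$, substitute $\varphi_1(s)=\tfrac{U_e}{2}e^{\eta_2 s}$ into the expanded form of Eq.~(\ref{Wave1}) (the two $\beta$ contributions cancel). Using $e^{-a\varphi_1}\le 1$, dropping the nonpositive term $-\delta\varphi_1(t+r_1)$, and invoking monotonicity of $\varphi_1$ with $r_2>0$ to replace $\varphi_1(t+r_1-r_2)$ by $\varphi_1(t+r_1)$, the left-hand side is bounded by
\[
\tfrac{U_e}{2}e^{\eta_2 t}\bigl(\eta_2^2-c\eta_2 e^{r_1\eta_2}+p e^{r_1\eta_2}\bigr),
\]
which vanishes by (\ref{CE1}).

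On the upper branch $t>0$, bound the birth term by $pU_e e^{-aU_e}=\delta U_e$, valid because $u\mapsto pue^{-au}$ is increasing on $[0,1/a]$ and $U_e\le 1/a$ is forced by $p/\delta\le e$. Substituting $\varphi_1(s)=U_e(1-\tfrac{1}{2}e^{-\eta_2 s})$ reduces the inequality to $\delta\le \eta_2^2 e^{r_1\eta_2}+c\eta_2$, which follows at once by rewriting (\ref{CE1}) as $c\eta_2=p+\eta_2^2 e^{-r_1\eta_2}$ and using $p>\delta$.

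The main obstacle will be the transition strips where $t$ is on one side of $0$ while $t+r_1$ or $t+r_1-r_2$ lies on the other, so that $\varphi_1$ takes values from both pieces within the same inequality. There I would patch the estimates above using monotonicity of $\varphi_1$ together with the saturation bound $pue^{-au}\le \delta U_e$; the key point is that the widths of these transition strips are $O(r_1)$, so for $r_1$ sufficiently small (precisely the hypothesis of the claim) the mixed terms are absorbed by the strict inequality $c\eta_2>\delta$ from the upper branch. This bookkeeping is routine but is where care is needed to make the a.e.\ inequality hold on all of $\R$.
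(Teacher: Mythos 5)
Your proposal is correct in substance and follows the same broad strategy as the paper (a case analysis over the branches of $\varphi_1$, with $t_0=0$ and the characteristic equation (\ref{CE1}) doing the work on the exponential branch), but your treatment of the saturation branch is genuinely different and in fact tighter than the paper's. The paper's Case 3 bounds $e^{-a\phi}\le 1$ and then argues asymptotically, with $\cosh(\eta_2 r_1)\approx 1$, an $o(r_1-r_2)$ term, and limits taken as $r_1\to 0$ and even $c\to 2\sqrt{p}$ (although $c>2\sqrt p$ is fixed), so its conclusion on $t\ge 0$ is only heuristic; your argument instead uses the exact saturation identity $p\,U_e e^{-aU_e}=\delta U_e$ (valid since $u\mapsto pue^{-au}$ is increasing on $[0,1/a]$ and $U_e\le 1/a$ because $p/\delta\le e$), which reduces the inequality on $t>0$ to $\delta\le \eta_2^2e^{r_1\eta_2}+c\eta_2$, an exact consequence of (\ref{CE1}) and $p>\delta$ requiring no smallness of $r_1,r_2$ at all; moreover, since the saturation bound does not care which branch $t+r_1-r_2$ lies on, your Case 3 already covers all of $t>0$. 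Your explicit derivation that continuity forces $t_0=0$ is also a point the paper leaves implicit. The one place you have only a sketch is the strip $-r_1<t\le 0$, where $\varphi_1''(t)$ is on the exponential branch while $\varphi_1'(t+r_1),\varphi_1(t+r_1)$ are on the saturation branch; the paper handles exactly this strip (its Case 2) by isolating the mismatch in the derivative term as $c\eta_2\sinh(r_1\eta_2)$ and absorbing it into $\delta\varphi_1(t+r_1)\ge \delta U_e/2$ for $r_1$ small. Your proposed patch (widths of the strip are $O(r_1)$, absorb the mixed terms using $p>\delta$, equivalently $c\eta_2 e^{r_1\eta_2}-\eta_2^2=p e^{r_1\eta_2}$, for $r_1$ small) is sound and amounts to the same estimate, but to be complete you should write it out: on that strip the left-hand side is at most $\tfrac{U_e}{2}\bigl(\eta_2^2 e^{\eta_2 t}-c\eta_2 e^{-\eta_2(t+r_1)}+\delta\bigr)$ after using the saturation bound and $\varphi_1(t+r_1)\ge U_e/2$, and this is negative for $r_1$ sufficiently small because $\eta_2\to\mu_2$ and $\mu_2^2-c\mu_2+\delta=\delta-p<0$.
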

\begin{proof}
From elementary calculus it is easy to see
\begin{align*}
\varphi_{1}^{\prime}(t) &  =\left\{
\begin{array}
[c]{l}%
\frac{U_e\eta_2}{2}e^{\eta_2t},\quad t\leq 0,\\
\frac{U_e\eta_2}{2}e^{-\eta_2 t},\quad t> 0
\end{array}
\right.  ,\quad\varphi_{1}^{\prime\prime}(t)=\left\{
\begin{array}
[c]{l}%
\frac{U_e\eta_2^{2}}{2}e^{\eta_2t},\quad t\leq 0,\\
\frac{-U_e\eta_{2}^{2}}{2}e^{-\eta_2t},\quad t> 0
\end{array}
\right. 
\end{align*}
Note that $\varphi_1'$ is continuous and bounded on $\R$ and $\varphi_1''$ exist and is continuous everywhere and bounded except for $t=0$. We will take $U_e=1,$ for brevity, since it appears in every term  The proof can now be completed in three cases.\\
{\bf Case 1 $t< - r_1 $}: We have 
\begin{align*}
&\phi^{\prime\prime}(t)-c\phi^{\prime}(t+r_1)-\delta \phi(t+r_1)+p \phi(t+(r_1-r_2))e^{-a \phi(t+(r_1-r_2))}\\
&=\frac{\eta_2^{2}}{2}e^{\eta_2t}-\frac{c\eta_2}{2}e^{\eta_2(t+r_1)}-\frac{\delta}{2} e^{\eta_2(t+r_1)}+\frac{p}{2}e^{\eta_2(t+r_1)}e^{-a \phi(t+(r_1-r_2))}\\
&\le\left(\frac{\eta_2^{2}}{2}-\frac{c\eta_2}{2}e^{\eta_2r_1}+\frac{p}{2}e^{\eta_2r_1}\right)e^{\eta_2t}-\frac{\delta}{2}  e^{\eta_2(t+r_1)}\\
&=-\frac{\delta}{2}  e^{\eta_2(t+r_1)}\le 0.
\end{align*}
{\bf Case 2 $- r_1\le t\le 0 $}: We have 
\begin{align*}
&\phi^{\prime\prime}(t)-c\phi^{\prime}(t+r_1)-\delta \phi(t+r_1)+p \phi(t+(r_1-r_2))e^{-a \phi(t+(r_1-r_2))}\\
&=\frac{\eta_{2}^{2}}{2}e^{\eta_2t}-\frac{c\eta_2}{2}e^{-\eta_2 (t+r_1)}-\delta \phi(t+r_1)+\frac{p}{2}e^{\eta_{2}(t+r_1-r_2)}e^{-a \phi(t+(r_1-r_2))}\\
&\le \frac{\eta_{2}^{2}}{2}e^{\eta_2t}-\frac{c\eta_2}{2}e^{-\eta_2 (t+r_1)}-\delta \phi(t+r_1)+\frac{p}{2}e^{\eta_{2}(t+r_1)}\\
&=\frac{\eta_{2}^{2}}{2}e^{\eta_2t}-\frac{c\eta_2}{2}e^{\eta_2 (t+r_1)}+\frac{p}{2}e^{\eta_{2}(t+r_1)} +\frac{c\eta_2}{2}e^{\eta_2 (t+r_1)}-\frac{c\eta_2}{2}e^{-\eta_2 (t+r_1)}-\delta \phi(t+r_1)\\
&\left(\frac{\eta_2^{2}}{2}-\frac{c\eta_2}{2}e^{\eta_2r_1}+\frac{p}{2}e^{\eta_2r_1}\right)e^{\eta_2t}+c\eta_2\sinh(r_1\eta_2)-\delta \phi(t+r_1)\\
&=c\eta_2\sinh(r_1\eta_2)-\delta \phi(t+r_1),
\end{align*}
because $\eta_2$ is a root of Eq. (\ref{CE1}).
When $r_1\to 0, \ c\eta_2\sinh(r_1\eta_2)\to 0.$ Thus, there is a small enough $r_1$ such that \[c\eta_2\sinh(r_1\eta_2)-\delta \phi(t+r_1)\le 0.\]

\medskip
{\bf Case 3 $0\le t$}: We have 
\begin{align*}
&\phi^{\prime\prime}(t)-c\phi^{\prime}(t+r_1)-\delta \phi(t+r_1)+p \phi(t+(r_1-r_2))e^{-a \phi(t+(r_1-r_2))}\\
&=\frac{-\eta_{2}^{2}}{2}e^{-\eta_2t}-\frac{c\eta_2}{2}e^{-\eta_2 (t+r_1)}+p\left(1-\frac{1}{2}e^{-\eta_{2}(t+(r_1-r_2))}\right)e^{-a \phi(t+(r_1-r_2))}-\delta \phi(t+r_1)\\
&\le \frac{-\eta_{2}^{2}}{2}e^{-\eta_2t}-\frac{c\eta_2}{2}e^{-\eta_2 (t+r_1)}+p\left(1-\frac{1}{2}e^{-\eta_{2}(t+(r_1-r_2))}\right)-\delta \phi(t+r_1)\\
&=\frac{-\eta_{2}^{2}}{2}e^{-\eta_2t}+\frac{c\eta_2}{2}e^{-\eta_2t+\eta_2r_1}-\frac{p}{2}e^{-\eta_2t+\eta_2r_1}+\frac{p}{2}e^{-\eta_2t+\eta_2r_1} -\frac{c\eta_2}{2}e^{-\eta_2t+\eta_2r_1}-\frac{c\eta_2}{2}e^{-\eta_2 (t+r_1)}\\
&+p\left(1-\frac{1}{2}e^{-\eta_{2}(t+(r_1-r_2))}\right)-\delta \phi(t+r_1).
\end{align*} 
\end{proof}
Using the fact that $\eta_2$ is a root of Eq. (\ref{CE1}) we see the following simplification
\begin{align*}
&=\frac{p}{2}e^{-\eta_2t+\eta_2r_1} -\frac{c\eta_2}{2}e^{-\eta_2t+\eta_2r_1}-\frac{c\eta_2}{2}e^{-\eta_2 (t+r_1)}+p\left(1-\frac{1}{2}e^{-\eta_{2}(t+(r_1-r_2))}\right)-\delta \phi(t+r_1)\\
&=-c\eta_2e^{-\eta_2t}\cosh(\eta_2r_1)+\frac{p}{2}e^{-\eta_2t}\left(e^{\eta_2r_1}-e^{\eta_2(r_1-r_2)}\right)-\delta \phi(t+r_1)+p.
\end{align*} 
Noticing that when $r_1\to 0,$ implies $\cosh(\eta_2r_1)\to 1$ and $e^{\eta_2r_1}-e^{\eta_2(r_1-r_2)}\to 1-e^{-\eta_2r_2}$ we can take $r_1, r_2$ small enough such that $e^{\eta_2r_1}-e^{\eta_2(r_1-r_2)} \approx o(r_1-r_2)$. This allows one to see that  for sufficiently small $r_1, r_2$
\begin{align*}
&=-c\eta_2e^{-\eta_2t}\cosh(\eta_2r_1)+\frac{p}{2}e^{-\eta_2t}\left(e^{\eta_2r_1}-e^{\eta_2(r_1-r_2)}\right)-\delta \phi(t+r_1)+p\\
&\approx \left(-c\eta_2+\frac{p}{2}o(r_1-r_2)\right)e^{-\eta_2t}-\delta \phi(t+r_1)+p\\
&\le-c\eta_2+\frac{p}{2}o(r_1-r_2)-\delta \phi(t+r_1)+p.
\end{align*} 
Moreover, \[\lim_{c \to 2\sqrt{p}} -c\mu_1=-p,\]
and \[\lim_{r_1\to 0} \eta_2(r_1) =\mu_1\] we have that
\begin{align*}
&-c\eta_2+\frac{p}{2}o(r_1-r_2)-\delta \phi(t+r_1)+p\\
&\approx \frac{p}{2}o(r_1-r_2)-\delta \phi(t+r_1).
\end{align*}
Thus, we can take $r_1, r_2$ small enough such that \[\frac{p}{2}o(r_1-r_2)<\delta \phi(t+r_1).\] 
This proves the result.

In order to construct quasi lower solutions we will look at a function, defined as \begin{align*}
f(t)&= a(t-T)^3+b(t-T)^2 +\frac{1}{2}.
\end{align*} 
Furthermore, for some large $T>0$ we have the properties:
\begin{enumerate}
\item This bridges smoothly the function $e^{\eta_1 t}/4$ and the constant function $1/2$
\item $f(-T)=(1/4)e^{\eta_1T}$, $f'(-T)= (-\eta_1/4)e^{\eta_1 T}$, $f'(T)=0$, $f(T)=1/2$.
\end{enumerate}
It was shown in Barker and Nguyen, \cite{BarkNguy} that 
\begin{align*}
a&=\frac{\eta_1 Te^{-\eta_1 T}+e^{-\eta_1 T}-2}{16T^3}\\
b&=\frac{-\eta_1 Te^{-\eta_1 T}+6\left( \frac{e^{-\eta_1 T}}{4}-\frac{1}{2}\right)}{8T^2},
\end{align*}
as well as the following claim.
\begin{claim} Define $f(t)$ to be the bridge function from above, then
\begin{equation}\label{bridge2}
\lim_{T\to\infty} \sup_{-T\le t\le T}\max \{ |f'(t)|,|f''(t)|\} =0.
\end{equation}
\end{claim}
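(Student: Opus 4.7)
My plan is to reduce the supremum estimate to quantitative asymptotics on the coefficients $a$ and $b$. Since $f$ is cubic in $(t-T)$, a single differentiation and one more give
\begin{equation*}
f'(t)=3a(t-T)^2+2b(t-T), \qquad f''(t)=6a(t-T)+2b.
\end{equation*}
On $t\in[-T,T]$ the factor $|t-T|$ never exceeds $2T$, so the crude pointwise estimates
\begin{equation*}
\sup_{-T\le t\le T}|f'(t)|\le 12|a|T^2+4|b|T, \qquad \sup_{-T\le t\le T}|f''(t)|\le 12|a|T+2|b|
\end{equation*}
reduce the claim to showing $|a|T^2\to 0$ and $|b|T\to 0$ as $T\to\infty$.

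The second step is to substitute the explicit formulas for $a$ and $b$ recalled from \cite{BarkNguy}. The formula for $a$ carries a $T^3$ in the denominator and the one for $b$ carries a $T^2$, so $|a|T^2$ and $|b|T$ each inherit a single factor of $1/T$ out front. If the numerators in the two expressions remain bounded in $T$, then both quantities are $O(1/T)$ and the conclusion follows after taking the maximum.

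The only substantive point is checking that those numerators stay bounded. Each numerator is a combination of a constant term, a term proportional to the exponential appearing in the bridging data, and a term of the form $\eta_1 T$ multiplied by that exponential. The sign of $\eta_1$ is fixed by the matching condition $f(-T)=e^{\eta_1 T}/4$ (which is required to be small as $T\to\infty$ so that the lower solution approaches $0$); with that sign the exponential decays, and the elementary limit $x\, e^{-|x|}\to 0$ dominates the polynomial prefactor $\eta_1 T$. Hence each numerator converges to a finite constant as $T\to\infty$, the $1/T$ prefactor gives the desired decay, and the supremum of $\max\{|f'|,|f''|\}$ on $[-T,T]$ vanishes in the limit. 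I do not anticipate any further obstacle beyond this exponential-versus-polynomial bookkeeping, which is entirely routine.
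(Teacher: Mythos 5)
Your argument is correct: the paper itself does not reprove this claim (it is imported from \cite{BarkNguy}), and your direct verification is exactly the routine computation required — the numerators of $a$ and $b$ tend to $-2$ and $-3$ respectively (since with $\eta_1>0$ both $e^{-\eta_1 T}$ and $\eta_1 T e^{-\eta_1 T}$ vanish as $T\to\infty$), so $a=O(T^{-3})$, $b=O(T^{-2})$, and with $|t-T|\le 2T$ on $[-T,T]$ the bounds $\sup|f'|\le 12|a|T^2+4|b|T$ and $\sup|f''|\le 12|a|T+2|b|$ are $O(1/T)$. Your reading of the sign convention is also the right one: the stated matching value $f(-T)=e^{\eta_1 T}/4$ is evidently a typo for $e^{-\eta_1 T}/4$ (the value of $e^{\eta_1 t}/4$ at $t=-T$ with $\eta_1>0$, as the formulas for $a$ and $b$ confirm), and with that sign the exponential-versus-polynomial bookkeeping goes through as you say.
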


\begin{claim}\label{claim 7}
For sufficiently small $r_1$ and $c>2\sqrt{\delta}$, and $\eta_1$ to be the positive root of Eq. (\ref{CE}), then  the function \[\underline\varphi(t)=
\begin{cases} \frac{ e^{\eta_1t}}{4}, \ t< -T ,\\
f(t), \ -T \le  t \le T \\
\frac{1}{2} , \ t> T ,
\end{cases}
 \]  is a quasi-upper solution of (\ref{Wave1}).
\end{claim}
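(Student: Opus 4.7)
The plan is to verify the quasi-upper-solution inequality
\[
\underline\varphi''(t) - c\,\underline\varphi'(t+r_1) - \delta\,\underline\varphi(t+r_1) + p\,\underline\varphi(t+r_1-r_2)\,e^{-a\underline\varphi(t+r_1-r_2)} \le 0
\]
for almost every $t\in\R$, by mirroring the three-case structure used in Claim~\ref{claim 6}. One partitions $\R$ according to which of the three pieces (exponential, bridge, constant) governs each of the shifted arguments $t$, $t+r_1$, and $t+r_1-r_2$. For sufficiently small delays $r_1, r_2$, this reduces to three principal regimes, far-left ($t < -T - r_1$), bridge middle, and far-right ($t > T$), plus two thin transitional windows of width $O(r_1+r_2)$ near $t = \pm T$ where the three slots straddle a breakpoint.

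In the far-left regime, substituting $\underline\varphi(s) = e^{\eta_1 s}/4$ into the three slots yields
\[
\tfrac14\bigl(\eta_1^2 - c\eta_1 e^{r_1\eta_1} - \delta e^{r_1\eta_1}\bigr) e^{\eta_1 t} + \tfrac{p}{4}\, e^{\eta_1(t+r_1-r_2)} e^{-a\underline\varphi(t+r_1-r_2)},
\]
where the parenthesised expression vanishes because $\eta_1$ solves Eq.~(\ref{CE}); the surviving Ricker residue is controlled using the monotonicity of $xe^{-ax}$ on $[0,U_e]$ together with the smallness of $r_1-r_2$. In the far-right regime, both derivative terms drop out and the expression collapses to the reaction at the constant value $1/2$, whose sign is forced by the equilibrium condition $1<p/\delta\le e$ together with (H1)--(H2). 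In the bridge regime, the estimate (\ref{bridge2}) gives that $|f'|$ and $|f''|$ tend uniformly to $0$ as $T\to\infty$, which lets us absorb the first two terms into an $o(1)$ error, leaving a reaction contribution controlled by the uniform bound $0 \le f \le 1/2 < U_e$.

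The main obstacle is the transitional windows $|t\pm T|\lesssim r_1+r_2$, where $\underline\varphi$ at the three shifted arguments is a hybrid of two different pieces and the clean cancellation of the far-left case is no longer directly available. The strategy is to Taylor-expand each piece about the relevant breakpoint and use the $C^1$-matching conditions of $f$ at $\pm T$ together with (\ref{bridge2}) to show that the mismatch between the exponential and bridge evaluations across the seam is $o(1)$ uniformly as $r_1, r_2 \to 0$ and $T \to \infty$. The resulting cross terms are then dominated by the strictly signed principal contribution already identified in the adjacent regime, closing the verification. Throughout, one must track that the parameters $r_1$, $r_2$, and $1/T$ can be chosen small \emph{simultaneously}, so the bounds in the three principal cases and the transitional windows hold with a common threshold.
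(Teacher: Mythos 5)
There is a genuine problem with the direction of your inequality. Despite the word ``upper'' in the claim's statement, the function $\underline\varphi$ is being constructed as a quasi-\emph{lower} solution (note the underline notation, and note that the Corollary needs one upper solution, already supplied by Claim \ref{claim 6}, and one lower solution sandwiched beneath it); the paper's proof accordingly establishes
\[
\underline\varphi''(t)-c\,\underline\varphi'(t+r_1)-\delta\,\underline\varphi(t+r_1)+p\,\underline\varphi(t+r_1-r_2)e^{-a\underline\varphi(t+r_1-r_2)}\ \ge\ 0
\]
in every case. Your plan to prove the reverse inequality $\le 0$ cannot be closed: in your far-left regime, after the parenthesised characteristic expression vanishes, the surviving term is $\tfrac{p}{4}e^{\eta_1(t+r_1-r_2)}e^{-a\underline\varphi(t+r_1-r_2)}$, which is strictly positive and cannot be ``controlled'' to be nonpositive by monotonicity of $xe^{-ax}$ or by shrinking $r_1-r_2$. (For the $\ge 0$ direction that same term finishes the case immediately, which is exactly what the paper does.) The same sign reversal infects your far-right regime: at the constant value the reaction term is $\tfrac12(pe^{-a/2}-\delta)$, which under the paper's normalization is nonnegative, not nonpositive. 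So the core computational content of your argument is aimed at an inequality that is false for this function.

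Two secondary remarks. First, your far-left cancellation uses $\mu^2-c\mu e^{r_1\mu}-\delta e^{r_1\mu}=0$, i.e.\ Eq.~(\ref{CE}), which is the correct characteristic equation here (the paper uses the same one); just be aware that this is a different equation from the one used for the upper solution in Claim \ref{claim 6}. Second, your insistence on analyzing the thin transitional windows near $t=\pm T$, where the three shifted arguments straddle a breakpoint, is actually more careful than the paper, whose Case 2 lumps $[-T-r_1,T]$ together and silently treats $\underline\varphi''(t)$ as $f''(t)$ even when $t<-T$. That extra care is welcome, but it does not rescue the argument: you must first flip the target inequality to $\ge 0$ throughout, after which the far-left case becomes trivial, the bridge case follows from (\ref{bridge2}) together with $p>\delta$, and the constant case reduces to $pe^{-aU_e/2}\ge\delta$, which holds since $aU_e=\ln(p/\delta)$.
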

\begin{proof} We will do this in cases just as before. 

\medskip
{\bf Case 1: $t\le -T- r_1$}
\begin{align*}
&\phi^{\prime\prime}(t)-c\phi^{\prime}(t+r_1)-\delta \phi(t+r_1)+p \phi(t+(r_1-r_2))e^{-a \phi(t+(r_1-r_2))}\\
&=\frac{\eta_1^{2}}{2}e^{\eta_1t}-\frac{c\eta_1}{2}e^{\eta_1(t+r_1)}-\frac{\delta}{2} e^{\eta_1(t+r_1)}+\frac{p}{2}e^{\eta_1(t+r_1)}e^{-a \phi(t+(r_1-r_2))}\\
&=\left(\frac{\eta_1^{2}}{2}-\frac{c\eta_1}{2}e^{\eta_1r_1}-\frac{\delta}{2} e^{\eta_1r_1}\right)e^{\eta_1t}+\frac{p}{2}e^{\eta_1(t+r_1)}e^{-a \phi(t+(r_1-r_2))}\\
&=\frac{p}{2}e^{\eta_1(t+r_1)}e^{-a \phi(t+(r_1-r_2))}\ge 0,
\end{align*}
because $\eta_1$ is a root for Eq. (\ref{CE}).

\medskip
{\bf Case 2:  $-T-r_1\le t\le T$}
This case follows from the fact that on this interval
\begin{align*}
 \sup_{-T-r_1 \le t\le T}  |\underline{\varphi}_{1}^{\prime\prime}(t)|\pm c|\underline{\varphi}_{1}^{\prime}(t+r_1)|= \sup_{-T\le t\le T} |f'(t)|\pm c|f''(t)|
\end{align*}
that could be made as small as we like by taking $T$ sufficiently large, and $p>\delta.$ Thus, we have for some large $T$
\begin{align*}
&\phi^{\prime\prime}(t)-c\phi^{\prime}(t+r_1)-\delta \phi(t+r_1)+p \phi(t+(r_1-r_2))e^{-a \phi(t+(r_1-r_2))}\\
&= -\delta \phi(t+r_1)+p \phi(t+(r_1-r_2))e^{-a \phi(t+(r_1-r_2))}+o(T)\ge 0.
\end{align*}
This case has been proven.

\medskip
{\bf Case 3:  $T\le t$}  This case is trivially due to the fact that the function is constant.
\end{proof}
\begin{corollary}
Assume that $c>2\sqrt{p}$ is given. Then, the Eq. (\ref{Wave1}) has a traveling wave solution $u(x,t){\phi(x+ct)}$ for sufficiently small delays $\tau_1,\tau_2$.
\end{corollary}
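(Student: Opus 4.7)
The plan is to assemble the Corollary by chaining the three main ingredients already established in Section 3: (i) Claim \ref{claim 6}, which produces a quasi-upper solution $\varphi_1$ for sufficiently small $r_1$; (ii) Claim \ref{claim 7}, which produces a quasi-lower solution $\underline{\varphi}$ for sufficiently small $r_1$; and (iii) the lemma stating that the convolution operator $F$ from equation (\ref{Solution}) upgrades a quasi-upper (quasi-lower) solution into a genuine $C^2$ upper (lower) solution of (\ref{Wave1}), together with the main existence theorem. First, I would check the compatibility of hypotheses: since we are in the regime $1<p/\delta\le e$ we have $p>\delta$, so the assumption $c>2\sqrt{p}$ automatically yields $c>2\sqrt{\delta}$, and the characteristic equations (\ref{CE}) and (\ref{CE1}) both have the required real roots $\eta_1,\eta_2$ with the perturbation estimates from Lemmas 3.3 and 3.5.

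Next I would fix $r_1$ and $r_2$ small enough that both Claim \ref{claim 6} and Claim \ref{claim 7} apply simultaneously (taking the minimum of the finitely many smallness thresholds that appeared in Cases 1--3 of each proof). This produces a quasi-upper solution $\varphi_1$ and a quasi-lower solution $\underline{\varphi}$. Applying the lemma on $F$ gives genuine $C^2$ upper and lower solutions $\overline{\phi}:=F(\varphi_1)$ and $\underline{\phi}:=F(\underline{\varphi})$ of (\ref{Wave1}). The order relation $0\le \underline{\phi}\le \overline{\phi}\le U_e$ follows because $F$ is a positive, order-preserving operator on $[0,U_e]$-valued profiles (its kernel $G$ acts against $H$, which is nondecreasing in the relevant range by (H2)), together with the pointwise inequality $\underline{\varphi}\le \varphi_1$ on $\R$, which is verified piecewise: for $t\le 0$ one compares $e^{\eta_1 t}/4$ with $(U_e/2)e^{\eta_2 t}$, and for $t>0$ the lower profile is bounded by $1/2\le U_e(1-\tfrac{1}{2}e^{-\eta_2 t})$ provided $U_e\ge 1$, which we arrange by rescaling since $U_e=a^{-1}\ln(p/\delta)$ can be normalised (the construction in Claim \ref{claim 6} already used $U_e=1$).

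With the ordered pair $\underline{\phi}\le\overline{\phi}$ in hand, the main Theorem on monotone iteration applies and produces a monotone traveling wave profile $\phi$ of (\ref{Wave1}) satisfying the asymptotic boundary conditions $\phi(-\infty)=U_0=0$ and $\phi(+\infty)=U_e$. Undoing the change of variables $\zeta=\xi-r_1$, $\xi=x+ct$ with $r_i=c\tau_i$ yields the traveling wave $u(t,x)=\phi(x+ct)$ for the original equation (\ref{NDE1}). Since the smallness requirement on $r_1,r_2$ translates directly (for fixed $c$) into a smallness requirement on $\tau_1,\tau_2$, the conclusion follows.

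The main obstacle I anticipate is the verification of the ordering $\underline{\varphi}\le \varphi_1$ across the matching region $-T\le t\le T$ of the bridge function, since both profiles are perturbations of exponentials with different exponents $\eta_1$ and $\eta_2$ that depend on $r_1$; one must confirm that the order persists uniformly as $r_1\to 0$, using $\lim_{r_1\to 0}\eta_1(r_1)=\lambda_2$ and $\lim_{r_1\to 0}\eta_2(r_1)=\mu_2$ together with the strict separation $\lambda_2<\mu_2$ coming from $p>\delta$. Once this uniform separation is secured, the remainder of the argument is a direct citation of the previously established lemmas and theorem.
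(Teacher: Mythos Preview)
Your proposal is correct and follows exactly the route the paper intends: the Corollary is stated in the paper without proof, as an immediate consequence of Claims~\ref{claim 6} and~\ref{claim 7} together with Lemma~3.4 and Theorem~3.5, and your write-up simply makes this chain explicit (including the observation that $c>2\sqrt{p}$ forces $c>2\sqrt{\delta}$ since $p>\delta$). Your additional care in checking the pointwise ordering $\underline{\varphi}\le\varphi_1$ across the bridge region is a detail the paper omits entirely, so in that respect your argument is more complete than the paper's own treatment.
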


\section{Numerical Simulations}
In this section we will construct, via a specific example numerical upper and lower solutions for Eq. (\ref{Wave1}). Using the formula found in Theorem (4.1) from Mallet-Peret, \cite{mal} we have
\begin{equation}\label{Green}
G(t,r)= -\frac{1}{2\pi}\int_{-\infty}^{\infty}\frac{e^{i\xi t}}{-\xi^2-ci\xi e^{i\xi r_1}-\delta e^{i\xi r_1}}d\xi
\end{equation}

\medskip
\noindent
A relatively simple numerical scheme should be appropriate due to the nature of the quasi upper and lower solutions and the smoother upper and lower solutions. To this end, we have the following lemma.
\begin{lemma}
Let $G(t,r)$ be the Green's function found in from Eq. (\ref{Green}) , then 
\begin{enumerate}
\item for all $N\in \N, \ t,\xi \in \R$ there is some positive constant, $K$ independent of $N$ such that  $$\left|G(-N,r)+G(N,r)\right|<K.$$
\item  For any small positive number $\varepsilon$ there is some sufficiently large $N \in \N$ dependent upon $\varepsilon$ such that for all $t,\xi \in \R$    
    $$\left|G(-N,r)+G(N,r)\right|<\varepsilon.$$
\end{enumerate}
\end{lemma}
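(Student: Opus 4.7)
My plan is to combine two approaches. The simplest route invokes the exponential pointwise bound $|G(t,r)| \le M_1 e^{-\delta_1 |t|}$ already recorded in the paper just before this lemma. By the triangle inequality,
\[|G(-N,r)+G(N,r)| \le 2M_1 e^{-\delta_1 N} \quad \text{for every } N \ge 0,\]
which is bounded by $2M_1$ uniformly in $N$ (giving item (i) with $K=2M_1$) and is less than any prescribed $\varepsilon>0$ as soon as $N > \delta_1^{-1}\log(2M_1/\varepsilon)$ (giving item (ii)). The quantifiers ``$t,\xi\in\R$'' in the statement are vestigial: the quantity $G(-N,r)+G(N,r)$ depends only on $N$ and on the fixed parameters $c,\delta,r_1$.

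For a self-contained derivation directly from the Fourier representation Eq. (\ref{Green}), I would combine $G(N,r)$ and $G(-N,r)$ via $e^{iN\xi}+e^{-iN\xi}=2\cos(N\xi)$ to obtain
\[G(N,r)+G(-N,r)=-\frac{1}{\pi}\int_{-\infty}^{\infty}\frac{\cos(N\xi)}{D(\xi)}\,d\xi, \qquad D(\xi):=-\xi^2-ci\xi e^{i\xi r_1}-\delta e^{i\xi r_1}.\]
The key preparatory step is to verify that $1/D\in L^1(\R)$. The earlier lemma on the characteristic equation guarantees $D$ has no roots on the imaginary axis, so $1/D$ is continuous on $\R$; and for $|\xi|$ large we have $|D(\xi)|\ge \xi^2-c|\xi|-\delta \ge \tfrac12\xi^2$, so $|1/D(\xi)|=O(\xi^{-2})$ at infinity. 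With integrability of $1/D$ secured, item (i) follows from the trivial bound $|\cos(N\xi)|\le 1$ by taking $K:=\pi^{-1}\|1/D\|_{L^1(\R)}$, and item (ii) is precisely the Riemann--Lebesgue Lemma applied to the cosine transform of $1/D$.

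The main obstacle I foresee is not really analytic but bookkeeping: one must check that $D$ does not vanish on the entire real line, not merely in the strip handled by Lemma 3.2. However, since $|D(\xi)|$ grows quadratically at infinity, any real zeros are confined to a compact set, and the earlier characteristic-equation analysis rules these out for sufficiently small $r_1$. In particular, the conclusion may require $r_1$ to be taken slightly smaller than in the preceding results, but no new ideas are needed. Either approach produces the lemma in a few lines; the exponential-bound route is shorter, while the Fourier route is what actually justifies using truncated numerical integration of $G$ in the next subsection.
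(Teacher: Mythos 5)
Your first route is exactly the paper's own proof: it applies the exponential decay bound $|G(t,r)|\le K_1 e^{-\delta_1|t|}$ together with the triangle inequality to get $|G(-N,r)+G(N,r)|\le 2K_1 e^{-\delta_1 N}$, takes $K=2K_1$ for part (i), and chooses $N\ge \delta_1^{-1}\ln(2K_1/\varepsilon)$ for part (ii), so your argument is correct and essentially identical (indeed stated more cleanly, since the paper's appeal to ``induction on $N$'' is unnecessary). The additional Fourier/Riemann--Lebesgue route you sketch is a valid alternative not present in the paper, but it is not needed for the result.
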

\begin{proof} The proof for both parts are straight forward due to the fact that for all $t\in \R$ there is some positive constant, $\delta_1$ such that $||G(t,r)||\le K_1e^{-\delta_1|t|}.$ In fact, part $i.)$ can be shown via induction on $N$. Take $N=1$, then 
\begin{align*}
\left|G(-N,r)+G(N,r)\right|\le 2K_1e^{-\delta_1}<2K_1.
\end{align*}
Take $K=2K_1$ then the result follows. In order to show part $ii.)$ fix $\varepsilon>0$  then for some $N\in \N$ such that \[N\ge \ln\left({\frac{\varepsilon}{2K_1}}^{\frac{-1}{\delta_1}}\right)\]
we have the following estimate:
\begin{align*}
&\left|G(-N,r)+G(N,r)\right|\le 2K_1 e^{-\delta_1 N}<\varepsilon.
\end{align*}
\end{proof}
\noindent This allows us to disregard the tails of the Green's function and focus on some finite interval in order to numerically approximate the integral for any fixed $t\in \R.$ In fact, the interval can be chosen to be relatively small. We know that the following iteration is convergent \[\phi_n=\int^\infty_{-\infty} G(t-s,r)H(\phi_{n-1}(s))ds,\]
where $H$
We will take $r_1=1, r_2=1/4, p=2, \delta=1, c=2\sqrt{2}, N=50, T=1$.
then we will approximate \[G(t,r)\approx\ -\frac{1}{2\pi}\int_{-50}^{50}\frac{e^{i\xi t}}{-\xi^2-ai\xi e^{i\xi}-be^{i\xi}}d\xi.\]
Furthermore, since $\phi_n\ge 0,$ we have the approximation
\[\phi_n=\left|\int^\infty_{-\infty} G(t-s,r)H(\phi_{n-1}(s))ds\right|\le\int^\infty_{-\infty} \left|G(t-s,r)H(\phi_{n-1}(s))\right|ds.\]
With this in mind, we know that $\left|G(t-s,r)\right|\le K_1e^{-\delta_1|t|}.$ Shifting the line of integration to the parallel line $z=\xi +i|\lambda_1|,$ where $\lambda_1$ is the negative root of Eq. (\ref{CE}) gives the following 
\[\left|G(t-s,r)\right|\le K_1e^{\lambda_1 |t|}.\]

\medskip
A composite Simpson's rule will be used to approximate 
\[K_1\approx \frac{1}{2\pi}\int_{-50}^{50}\frac{1}{\left(\xi +i|\lambda_1|\right)^2+\left[2i\left(\xi +i|\lambda_1|\right)+1\right]e^{(\xi +i|\lambda_1|)}}d\xi.\]
For brevity, we denote 
\[f(\xi)=\frac{1}{2\pi}\int_{-50}^{50}\frac{1}{\left(\xi +i|\lambda_1|\right)^2+\left[2i\left(\xi +i|\lambda_1|\right)+1\right]e^{(\xi +i|\lambda_1|)}},\]
then it is well know that the composite Simpson's rule can be written as \[I_n=\int_{-50}^{50}f(\xi)\approx \frac{h}{3}\left[f(-50)+4\sum_{i=1}^{n/2} f(\xi_{2i-1})+2\sum_{i=1}^{n/2-1} f(\xi_{2i})+f(50),\right]\]
where $h=100/n, \ n$ is the number of sub intervals. The results for various step sizes (calculated in MATLAB) can be found in the table below, as well as plots  for the quasi upper and lower solutions below  

\medskip
\begin{center}
\begin{tabular}{||c c c||} 
 \hline
 $n$ & $h$ & $|I_n|$ \\ 
 \hline \hline
 100 & 1 & .2861  \\ 
 \hline
 1000 & .1 & .3064  \\
 \hline
 10000 & .01 & .3066  \\
 \hline
 100000 & .001 & .3067  \\
 \hline
 1000000 & .0001 & .3067 \\ 
 \hline
\end{tabular}
\end{center}

\medskip

\begin{figure}[h]
\centering
\begin{tabular}{cc}
\includegraphics[width=7cm]{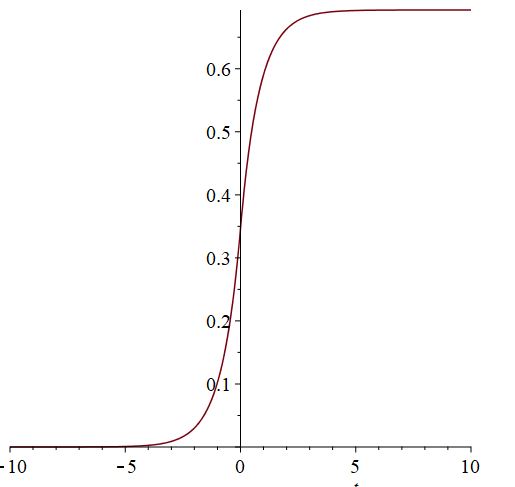}&
\includegraphics[width=6cm]{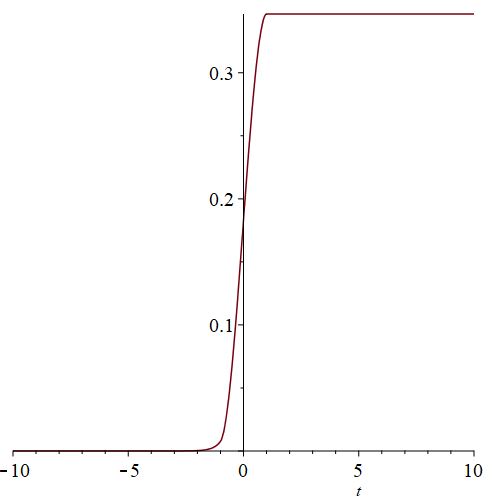}\\
Quasi Upper Solution &\parbox[t]{4cm}{Quasi Lower Solution}
\end{tabular}
\end{figure}

\medskip

\newpage 
\bibliographystyle{amsplain}

\end{document}